\theoremstyle{plain}
\newtheorem{thm}{Theorem}
\newtheorem{cor}[thm]{Corollary}
\newtheorem{lemma}[thm]{Lemma}
\newtheorem{prop}[thm]{Proposition}
\theoremstyle{definition}
\newtheorem{remk}[thm]{Remark}
\begin{document}
\title[On Harmonic Functions of Killed Random Walks in two Dimensional Convex Cones]{On Harmonic Functions of Killed Random Walks in two Dimensional Convex Cones}

\author[Duraj]{Jetlir Duraj}

\address{Mathematical Institute, University of Munich, Theresienstrasse 39, D--80333
Munich, Germany}
\email{jetlir.duraj@mathematik.uni-muenchen.de}

\begin{abstract}
We prove the existence of uncountably many positive harmonic functions for random walks on the euclidean lattice with non-zero drift, killed when leaving two dimensional convex cones with vertex in $0$. Our proof is an adaption of the proof for the positive quadrant from \cite{igrolo}. We also make the natural conjecture about the Martin boundary for general convex cones in two dimensions. This is still an open problem and here we only indicate where the proof technique for the positive quadrant breaks down.
\end{abstract}


\keywords{random walk, exit time, cones, conditioned process, Martin boundary}

\maketitle
\section{Introduction and statement of result}\label{sec:intro}
\indent We prove that for random walks of non zero drift on the euclidian lattice, killed when leaving a convex two dimensional cone with vertex in $0$, there are uncountably many positive harmonic functions. The main assumption is finiteness of the jump generating function of the step of the random walk. The proof is constructive and an adaptation of the similar proof in \cite{igrolo}, which considers the special case of the positive quadrant. We also make a conjecture about the Martin boundary of such random walks and comment on the difficulties in translating the \cite{igrolo} proof to the more general setting we are considering.
\newline \indent To begin, take $\Lambda$ to be a set of two points in $\mathbb{S} ^1$, $\Lambda = \{c_1, c_2\}$, so ordered that the angle $\phi$ between them is in $(0, \pi)$. The rays from (0,0) to infinity going through the $\mathbb{S}^1$-sector between the two vectors in $\Lambda$ enclose a convex cone. We will call the interior of such a cone $K$. It depends on the vectors we chose, i.e. $K=K(c_1, c_2)$. We also note the unit vectors $f_1$ and $f_2$, respectively perpendicular to $c_1$ and $c_2$, pointing inwards. See figure \ref{fig:f1} for a typical example.
\newline 
\indent We consider a random walk on the euclidean two dimensional lattice $\mathbb{Z}^2$ with step distribution $\gamma$ which satisfies the following assumptions :
\begin{description}
\item [A1] The homogeneous random walk $S(n) = (S_1(n),S_2(n))$ is irreducible and has 
\begin{equation}
m := \sum_{z \in \mathbb{Z}^2} z\gamma (z) \not = 0.
\end{equation}
\item [A2] The random walk killed when leaving $K$ is irreducible in $K$.
\item [A3] The jump generating function 
\begin{equation}
\varphi (a) = \sum_{z \in \mathbb{Z}^2} \gamma (z) e^{a \cdot z}
\end{equation}
is finite everywhere in $\mathbb{R}^2$.
\item [A4] $f_i \cdot S(n)$ is an aperiodic random walk on its respective lattice $f_i \cdot   \mathbb{Z}^2$.
\end{description}
\begin{figure}
  \caption{A convex cone in $\mathbb{R}^2$.}
  \centering
    \includegraphics[width=0.8\textwidth]{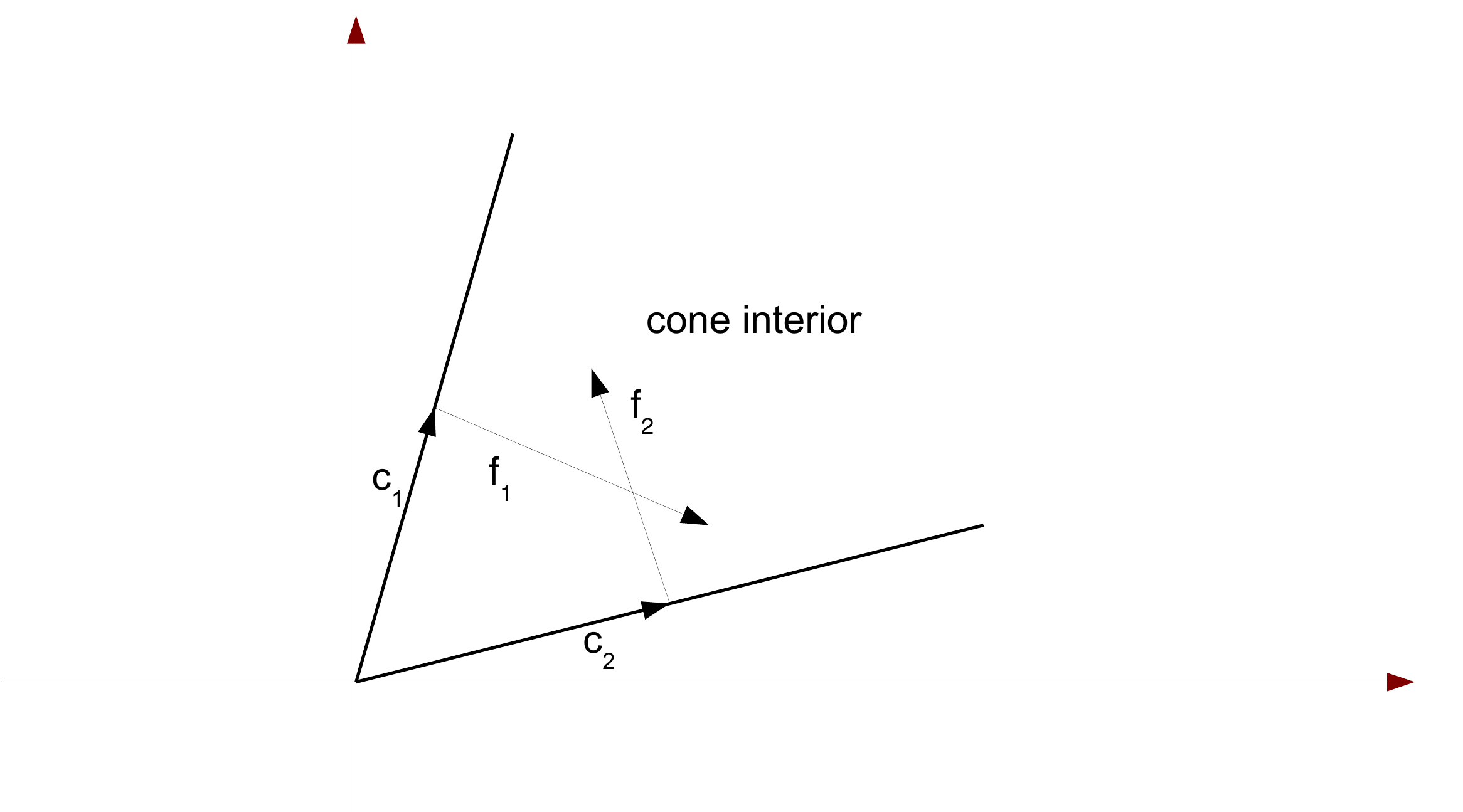}
\label{fig:f1}
\end{figure}
We note here that assumption \textbf{A3} is indispensable for studying nontrivial cases of the Martin boundary of random walks, killed when leaving cones in euclidean spaces. Indeed, as \cite{doney} proves: for a one dimensional random walk on the integers, with negative drift and such that \textbf{A3} is not fulfilled, which is killed when leaving the set of nonnegative real numbers, there doesn't exist any nonnegative nontrivial harmonic function. 
\newline We will denote $\mathbb{E}_z$ for the measure describing the distribution of random walks started at $z$, i.e. with $S(0) = z$.
\newline \indent Under these assumptions it is well-known (see \cite{igrolo} and references therein), that 
\begin{equation}
D = \{a \in \mathbb{R}^2 : \varphi(a) \leq 1\}
\end{equation}
\begin{figure}
  \caption{A typical $D$.}
  \centering
    \includegraphics[width=0.8\textwidth]{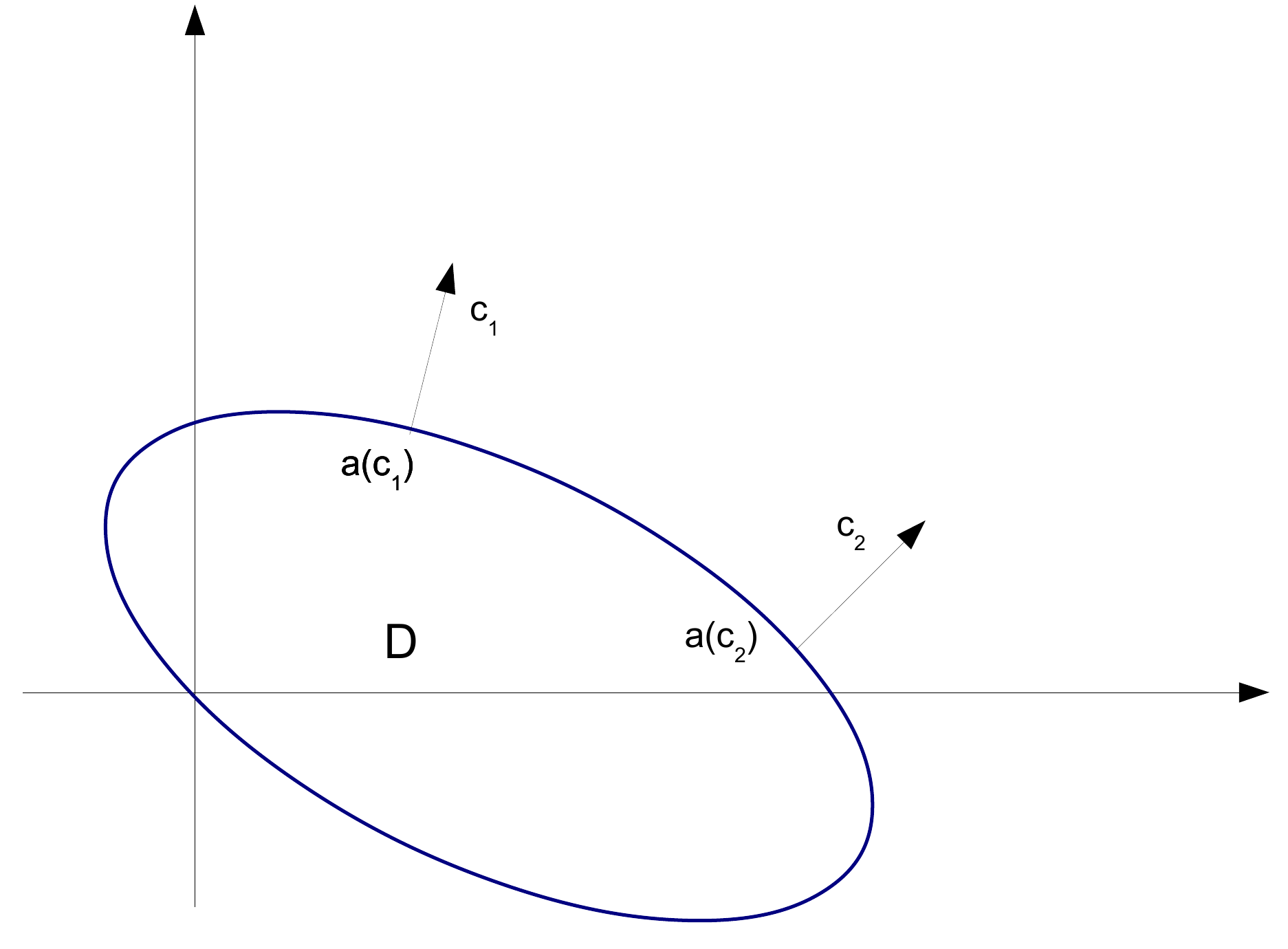}
\label{fig:f2}
\end{figure}
is a strictly convex and compact set, the gradient $\nabla \varphi (a)$ exists everywhere and does not vanish on $\partial D = \{a \in \mathbb{R}^2 : \varphi (a) = 1\}$, and the mapping 
\begin{equation}
a \rightarrow q(a) = \frac{\nabla \varphi(a)}{|\nabla \varphi(a)|}
\end{equation}
is a homeomorphism between $\partial D$ and $\mathbb{S}^2$. The inverse mapping is denoted by $q\rightarrow a(q)$ and we extend this map to nonzero $q \in \mathbb{R}^2$ by setting $a(q):=a\left(\frac{q}{|q|}\right)$. This definition implies that $a(q)$ is the only point in $\partial D$ where $q$ is normal to $D$. See figure \ref{fig:f2} for a typical picture of $D$.
\newline \indent Fixing a cone $K$ of the type described at the beginning and defining 
\begin{equation}
\Gamma = \{a \in \partial D\ | q(a) \in \Sigma = cl(K) \cap \mathbb{S}^1\}
\end{equation} 
as well as the stopping time 
\begin{equation}
\tau = \inf = \{n \geq 0 | S(n) \not \in K \}
\end{equation}
we want to prove the following.\newpage
\begin{prop}\label{prop:harm}
For every $a \in \Gamma$ and $z \in K$
\begin{equation}
h_a(z) = \left\{\begin{array}{cl} z \cdot f_i \exp(a\cdot z) - \mathbb{E}_z[f_i\cdot S(\tau)\exp(a\cdot S(\tau)), \tau <\infty] , & \mbox{if }q(a) = c_i,\text{ } i =1,2\\ \exp(a\cdot z) - \mathbb{E}_z[\exp(a\cdot S(\tau)), \tau< \infty], & \mbox{if } q(a) \in int(\Sigma) \end{array}\right.
\end{equation} 
 are strictly positive and harmonic for the random walk, killed when leaving the cone. 
\end{prop}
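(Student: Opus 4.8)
\medskip\noindent\emph{Proof strategy.}\ The plan is to exhibit each $h_a$ as $g_a-u_a$, where $g_a$ is an exponential-type function harmonic for the \emph{un}killed walk and $u_a(z)=\mathbb{E}_z[g_a(S(\tau));\,\tau<\infty]$ is its harmonic extension from the exit set, and then to read off finiteness and positivity from the Cram\'er change of measure attached to $a\in\partial D$. I take $g_a(z)=\exp(a\cdot z)$ when $q(a)\in\mathrm{int}(\Sigma)$ and $g_a(z)=(f_i\cdot z)\exp(a\cdot z)$ when $q(a)=c_i$ (the latter being the two endpoints of $\Gamma$). The first point is the pair of one–step identities. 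Since $a\in\partial D$ means $\varphi(a)=1$, we get $\mathbb{E}_z[\exp(a\cdot S(1))]=\exp(a\cdot z)\,\varphi(a)=\exp(a\cdot z)$, so $\exp(a\cdot S(n))$ is a martingale; and a short computation gives $\mathbb{E}_z[(f_i\cdot S(1))\exp(a\cdot S(1))]=\exp(a\cdot z)\big((f_i\cdot z)\varphi(a)+f_i\cdot\nabla\varphi(a)\big)$, where $\varphi(a)=1$ and $f_i\cdot\nabla\varphi(a)=0$ because $\nabla\varphi(a)$ is parallel to $q(a)=c_i$ and $f_i\perp c_i$; hence $(f_i\cdot S(n))\exp(a\cdot S(n))$ is a martingale too. (Equivalently, $a+\mathbb{R}f_i$ is the tangent line to $\partial D$ at $a$.) Granting that $u_a$ is finite on $K$, it follows that $u_a(z)=\mathbb{E}_z[u_a(S(1))]$ for $z\in K$ — split on the first step and use the Markov property, noting $u_a\equiv g_a$ off $K$ since $\tau=0$ there — and therefore $h_a=g_a-u_a$ also satisfies $h_a(z)=\mathbb{E}_z[h_a(S(1))]$ on $K$; since $h_a$ vanishes off $K$ this is precisely harmonicity for the walk killed on leaving $K$.

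For finiteness and nonnegativity I pass to the measure $\widetilde{\mathbb{P}}_z$ with density $\exp(a\cdot(S(n)-z))$ on $\mathcal{F}_n$, which is a probability since $\varphi(a)=1$ and under which $S$ has drift $\nabla\varphi(a)=|\nabla\varphi(a)|\,q(a)$. Applying this at the bounded times $\tau\wedge n$ and letting $n\to\infty$ gives, when $q(a)\in\mathrm{int}(\Sigma)$, $u_a(z)=\exp(a\cdot z)\,\widetilde{\mathbb{P}}_z[\tau<\infty]$, so $u_a$ is automatically finite and
\[
h_a(z)=\exp(a\cdot z)\,\widetilde{\mathbb{P}}_z[\tau=\infty]\ \ge\ 0 .
\]
When $q(a)=c_i$ the same manipulation gives $h_a(z)=\exp(a\cdot z)\big(f_i\cdot z-\widetilde{\mathbb{E}}_z[f_i\cdot S(\tau);\,\tau<\infty]\big)$; under $\widetilde{\mathbb{P}}_z$ the one–dimensional walk $f_i\cdot S(n)$ has centred increments (zero drift, aperiodic by \textbf{A4}), so $f_i\cdot S(n\wedge\tau)$ is a martingale which is strictly positive for $n<\tau$, and once the overshoot at the exit is controlled (so that $\widetilde{\mathbb{E}}_z[(f_i\cdot S(\tau))^-;\,\tau<\infty]<\infty$) a Fatou argument on its positive part yields $\widetilde{\mathbb{E}}_z[f_i\cdot S(\tau);\,\tau<\infty]\le f_i\cdot z$, hence $h_a\ge 0$ and $u_a$ finite here as well.

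Strict positivity then comes from a dichotomy: a nonnegative function harmonic for the killed walk that vanishes off $K$ is, by irreducibility in $K$ (\textbf{A2}), either identically $0$ on $K$ or strictly positive there, so it suffices to find one $z_0\in K$ with $h_a(z_0)>0$. If $q(a)\in\mathrm{int}(\Sigma)$, the drift $\nabla\varphi(a)$ points strictly into $K$, so both coordinates $f_k\cdot S(n)$, $k=1,2$, have strictly positive $\widetilde{\mathbb{P}}$–drift; taking $z_0\in K\cap\mathbb{Z}^2$ deep enough in the cone that each $f_k\cdot z_0$ comfortably exceeds the almost surely finite infimum of the corresponding coordinate's increment sums makes $\widetilde{\mathbb{P}}_{z_0}[\tau=\infty]>0$, hence $h_a(z_0)>0$. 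If $q(a)=c_i$, then $f_i\cdot S$ has zero drift and almost surely returns to $(-\infty,0]$, so $\tau<\infty$ $\widetilde{\mathbb{P}}_z$–a.s.; choosing $z_0$ close to the ray $\mathbb{R}_{\ge 0}c_i$ but far out along it, the transverse coordinate $f_j\cdot S$ with $j\ne i$ has strictly positive drift (since $c_i\cdot f_j>0$) and stays positive with probability close to $1$, so the exit occurs through the $c_i$–side, where $f_i\cdot S(\tau)\le 0$; weighing this dominant contribution against the small complementary one should give $\widetilde{\mathbb{E}}_{z_0}[f_i\cdot S(\tau)]<f_i\cdot z_0$, i.e.\ $h_a(z_0)>0$.

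The steps above are soft except for two quantitative inputs, both in the case $q(a)=c_i$: (i) integrability of the exit value, $\widetilde{\mathbb{E}}_z[\,|f_i\cdot S(\tau)|;\,\tau<\infty]<\infty$, which I would obtain from a Green's-function/renewal bound for the stopped centred walk $f_i\cdot S(\cdot\wedge\tau)$ together with the exponential-moment hypothesis \textbf{A3} controlling the jump out of $K$; and (ii) the strict inequality $\widetilde{\mathbb{E}}_{z_0}[f_i\cdot S(\tau)]<f_i\cdot z_0$ at a well-chosen $z_0$, i.e.\ making the optional-stopping identity genuinely lossy. These are the two–dimensional counterparts of the boundary estimates in \cite{igrolo}, and (ii) is the step I expect to be the main obstacle, since it is there — together with the interplay of the two sides of the cone and assumption \textbf{A4} — that the argument for the quadrant has to be reworked.
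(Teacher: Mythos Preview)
Your outline is essentially the paper's proof: the decomposition $h_a=g_a-u_a$, the Cram\'er tilt, the martingale identities for $g_a$, the first-step Markov argument for harmonicity of $u_a$, the SLLN argument for strict positivity when $q(a)\in\mathrm{int}(\Sigma)$, and the Harnack/irreducibility reduction to a single good point in the endpoint case all match the paper. The two items you flag as outstanding are precisely where the paper does its work, and both are handled by one geometric trick that you have not identified.

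For (i), split $\{\tau<\infty\}=\{\tau=\tau_i\}\cup\{\tau=\tau_j<\tau_i\}$. On $\{\tau=\tau_i\}$ the question reduces, after tilting, to the overshoot $\widetilde{\mathbb{E}}_{f_i\cdot z}[\,|f_i\cdot S(\tau_i)|\,]$ of the one--dimensional centred walk $f_i\cdot S$ below $0$; this walk has finite variance by \textbf{A3}, so its overshoot has finite mean by a standard ladder-height bound (the paper invokes Borovkov--Foss and Chow). On $\{\tau=\tau_j<\tau_i\}$ one has $f_i\cdot S(\tau)>0$ and $f_j\cdot S(\tau)\le 0$, hence for any $\delta,\epsilon>0$
\[
|f_i\cdot S(\tau)|\,e^{a\cdot S(\tau)}\ \le\ \tfrac{1}{\delta}\,e^{\delta f_i\cdot S(\tau)}\,e^{a\cdot S(\tau)}\ \le\ \tfrac{1}{\delta}\,e^{(a+\delta f_i-\epsilon f_j)\cdot S(\tau)}.
\]
Because $f_i$ is tangent to $\partial D$ at $a=a(c_i)$, for every small $\delta>0$ there is an $\epsilon>0$ with $\tilde a:=a+\delta f_i-\epsilon f_j\in D$; the tilt identity then bounds the expectation of the right side by $\tfrac{1}{\delta}e^{\tilde a\cdot z}$. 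This disposes of integrability without any Green's-function estimate.

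For (ii), the same inequality, divided by $e^{a\cdot z}$, gives for the ``wrong-side'' contribution $B:=\widetilde{\mathbb{E}}_z[\,f_i\cdot S(\tau);\,\tau=\tau_j<\tau_i\,]$ the bound
\[
0\ \le\ B\ \le\ \tfrac{1}{\delta}\,e^{(\epsilon f_j-\delta f_i)\cdot z},
\]
while the contribution of $\{\tau=\tau_i\}$ is $\le 0$. Choose $v\in K$ with $(\epsilon f_j-\delta f_i)\cdot v<0$; this is possible near the edge $c_j$, since $f_j\cdot c_j=0$ and $f_i\cdot c_j>0$. Along $z_n=nv$ one has $f_i\cdot z_n\to\infty$ linearly and $B_n\to 0$ exponentially, so $h_a(z_n)>0$ for large $n$. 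Note that the good $z_0$ is pushed towards the \emph{opposite} edge $c_j$, not towards $c_i$ as in your heuristic; your direction may also work, but the paper's choice turns the estimate into a one-line exponential bound.
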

These harmonic functions are just a generalization of the functions found in \cite{igrolo}. Intuitively, a look at figure \ref{fig:f1} and at their paper suggests, that these functions must be the harmonic functions for the general case.
\newline\indent
Finally, one can see how the result in \cite{igrolo} immediately follows from this Proposition by taking $c_1 = (0,1)$ and $c_2=(1,0)$.
\begin{prop}[\cite{igrolo}-Harmonic functions for the positive quadrant]\label{prop:harmq}
For every $a \in \Gamma_+ : = \{a \in \partial D: q(a) \in \mathbb{R}_+^2, \text{  }|q(a)|=1\}$ and $z = (x_1,x_2) \in \mathbb{N}^{*}\times \mathbb{N}^{*}$ 
\begin{equation}
h_a(z) = \left\{\begin{array}{cl}   x_1 \exp(a\cdot z) - \mathbb{E}_z [S_1(\tau)\exp(a\cdot S(\tau)), \tau <\infty] , & \mbox{if }q(a) = (0,1),\\ x_2 \exp(a\cdot z) - \mathbb{E}_z [S_2(\tau)\exp(a\cdot S(\tau)), \tau <\infty] , & \mbox{if }q(a) = (1,0)\\\exp(a\cdot z) - \mathbb{E}_z[\exp(a\cdot S(\tau)), \tau< \infty], & \mbox{otherwise }  \end{array}\right.
\end{equation} 
 are strictly positive and harmonic for the random walk, killed when leaving the positive quadrant.
\end{prop}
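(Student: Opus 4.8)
The plan is to prove Proposition~\ref{prop:harm}; Proposition~\ref{prop:harmq} is then literally the special case $c_1=(0,1)$, $c_2=(1,0)$ (for which $f_1=(1,0)$, $f_2=(0,1)$, $\Sigma$ is the first‑quadrant arc and $K\cap\mathbb{Z}^2=\mathbb{N}^{*}\times\mathbb{N}^{*}$, and the three branches of $h_a$ become the stated ones). The engine is the exponential (Cram\'er) change of measure. Since $a\in\partial D$ we have $\varphi(a)=1$, so $(e^{a\cdot(S(n)-S(0))})_n$ is a mean‑one martingale and we may tilt $\mathbb{E}_z$ by it on each $\sigma$‑field; call the new law $\mathbb{P}^{(a)}_z$. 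Under $\mathbb{P}^{(a)}_z$ the walk is still irreducible (also inside $K$, the density being strictly positive), it inherits all exponential moments from \textbf{A3}, and its drift is $\nabla\varphi(a)$, which by the definition of $a(\cdot)$ points in the direction $q(a)$. Put $g\equiv1$ if $q(a)\in\mathrm{int}(\Sigma)$ and $g(w)=f_i\cdot w$ if $q(a)=c_i$; then $g(S(n))e^{a\cdot S(n)}$ is a $\mathbb{P}_z$‑martingale, for $g\equiv1$ because $\varphi(a)=1$, and for $g=f_i\cdot$ because in addition $\sum_w\gamma(w)(f_i\cdot w)e^{a\cdot w}=f_i\cdot\nabla\varphi(a)=0$ since $f_i\perp c_i\parallel\nabla\varphi(a)$. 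First I would record the finiteness inputs: for $g\equiv1$ the martingale is nonnegative and Fatou gives $\mathbb{E}_z[e^{a\cdot S(\tau)};\tau<\infty]\le e^{a\cdot z}<\infty$; for $g=f_i\cdot$ one needs $\mathbb{E}_z[|f_i\cdot S(\tau)|e^{a\cdot S(\tau)};\tau<\infty]<\infty$, i.e.\ $\mathbb{E}^{(a)}_z[|f_i\cdot S(\tau)|;\tau<\infty]<\infty$, which I would obtain from \textbf{A3} by standard overshoot bounds for the one‑dimensional walk $f_i\cdot S$ together with the exponentially small probability of exiting $K$ through the \emph{other} edge. Granting this, optional stopping at $\tau\wedge n$ gives
\begin{equation*}
g(z)e^{a\cdot z}=\mathbb{E}_z\bigl[g(S(\tau))e^{a\cdot S(\tau)};\tau\le n\bigr]+\mathbb{E}_z\bigl[g(S(n))e^{a\cdot S(n)};\tau>n\bigr],
\end{equation*}
and letting $n\to\infty$ (dominated convergence in the first term) shows that the second term converges and yields
\begin{equation*}
h_a(z)=\lim_{n\to\infty}\mathbb{E}_z\bigl[g(S(n))e^{a\cdot S(n)};\tau>n\bigr]=e^{a\cdot z}\lim_{n\to\infty}\mathbb{E}^{(a)}_z\bigl[g(S(n));\tau>n\bigr]\ \ge\ 0,
\end{equation*}
the nonnegativity because $g(S(n))>0$ on $\{\tau>n\}\subseteq\{S(n)\in K\}$ (recall $f_i\cdot w>0$ for $w\in K$ as $\phi\in(0,\pi)$); for $g\equiv1$ the right‑hand side is $e^{a\cdot z}\,\mathbb{P}^{(a)}_z(\tau=\infty)$.

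Harmonicity I would then get directly. With $u(z)=\mathbb{E}_z[g(S(\tau))e^{a\cdot S(\tau)};\tau<\infty]$ one has $u(w)=g(w)e^{a\cdot w}$ off $K$ and $u(z)=\sum_{z'}\gamma(z'-z)u(z')$ for $z\in K$ by one step of the Markov property (licit by the finiteness above); splitting this sum over $z'\in K$ and $z'\notin K$, substituting $u=g\cdot e^{a\cdot\,}$ in the second part, and subtracting the mean‑value identity $g(z)e^{a\cdot z}=\sum_{z'}\gamma(z'-z)g(z')e^{a\cdot z'}$, the off‑$K$ contributions cancel and $h_a(z)=\sum_{z'\in K}\gamma(z'-z)h_a(z')$, i.e.\ $h_a$ is harmonic for the killed walk.

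For strict positivity, note that by the minimum principle and \textbf{A2} a nonnegative harmonic function of the (irreducible) killed walk vanishing somewhere in $K$ vanishes on all of $K$, so it suffices to exhibit one $z\in K$ with $h_a(z)>0$. If $q(a)\in\mathrm{int}(\Sigma)$ this is immediate: under $\mathbb{P}^{(a)}_z$ the drift $\nabla\varphi(a)$ lies in the open cone $K$, hence by the strong law $S(n)=n\nabla\varphi(a)+o(n)\in K$ from some random time on a.s.\ — here the vertex being $0$ matters, since then $\mathrm{dist}(n\nabla\varphi(a),\partial K)\to\infty$ — and decomposing at that time together with \textbf{A2} gives $\mathbb{P}^{(a)}_z(\tau=\infty)>0$, whence $h_a>0$ everywhere. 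If $q(a)=c_i$, write $\tau=\min(\tau_1,\tau_2)$ with $\tau_k=\inf\{n:f_k\cdot S(n)\le0\}$; under $\mathbb{P}^{(a)}_z$ the transverse coordinate $f_i\cdot S$ is a \emph{centered} nondegenerate lattice walk (\textbf{A4}), so it oscillates and $\tau\le\tau_i<\infty$ a.s., which is exactly why the weighting by $f_i\cdot S$ is built into $h_a$. Here I would compare with the half‑plane $H_i=\{w:f_i\cdot w>0\}\supseteq K$: optional stopping for the martingale $f_i\cdot S$ and the sign of its overshoot below $0$ give $\mathbb{E}^{(a)}_z[f_i\cdot S(n);\tau_i>n]\ge f_i\cdot z$ for every $n$, and then, choosing $z$ deep inside $K$ on the side of the other edge $c_j$ ($j\neq i$), the extra requirement $\tau_j>n$ costs only a small amount, controlled by the exponentially small probability (a Lundberg bound from \textbf{A3}) that the positive‑drift walk $f_j\cdot S$, started far from $0$, ever reaches $0$, plus moment estimates; this forces $h_a(z)>0$ for such $z$, and positivity then propagates.

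The step I expect to be the real obstacle is the last one — strict positivity in the boundary case $q(a)=c_i$. One must show that the survival‑weighted expectation $\mathbb{E}^{(a)}_z[f_i\cdot S(n);\tau>n]$ does not decay to $0$, and the trouble is that on $\{\tau>n\}$ the weight $f_i\cdot S(n)$ is typically of order $\sqrt{n}$, so the error coming from the second edge $c_j$ must be dominated \emph{uniformly in $n$}. This is precisely where one leans on \textbf{A3} (finite variance and exponential moments of $f_i\cdot S$, for the overshoot and Lundberg‑type estimates) and on \textbf{A4} (aperiodicity of $f_i\cdot S$, ensuring strict positivity of the limiting renewal function of the one‑dimensional problem), and where adapting the argument of \cite{igrolo} from the quadrant to a general convex cone takes genuine care; everything else is bookkeeping around the change of measure.
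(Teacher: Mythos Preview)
Your plan is correct and matches the paper's approach almost step for step: the Cram\'er tilt, the martingale property of $g(S(n))e^{a\cdot S(n)}$, optional stopping, the Markov-property computation of harmonicity, the SLLN under $\mathbb P^{(a)}$ for strict positivity when $q(a)\in\mathrm{int}(\Sigma)$, and Harnack/minimum principle to propagate positivity from one point.  The finiteness input you call ``standard overshoot bounds'' is exactly Lemmas~\ref{lemma:l4}--\ref{lemma:l5} of the paper (a Borovkov--Foss type estimate for the centred one-dimensional walk $f_i\cdot S_a$, plus the tilt of Corollary~\ref{cor:cor1} for the other edge).

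The one place where your framing makes life harder than necessary is the boundary case $q(a)=c_i$. You cast the problem as controlling $\mathbb E^{(a)}_z[f_i\cdot S(n);\,\tau_i>n,\ \tau_j\le n]$ \emph{uniformly in $n$}, and correctly flag the $\sqrt n$ growth as the worry. The paper sidesteps this entirely: by the very optional-stopping identity you already wrote, the limit in $n$ is $h_a(z)e^{-a\cdot z}=f_i\cdot z-\mathbb E^{(a)}_z[f_i\cdot S(\tau)]$, and splitting $\{\tau=\tau_i\}$ (contribution $\le 0$) from $\{\tau=\tau_j<\tau_i\}$ reduces the task to a single, $n$-free bound on $B:=\mathbb E^{(a)}_z[f_i\cdot S(\tau_j);\,\tau_j<\tau_i]$. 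Your ``Lundberg plus moment'' idea is then exactly the paper's move: pick $\delta,\epsilon>0$ with $\tilde c:=a+\delta f_i-\epsilon f_j\in\partial D$ (possible since $f_i$ is tangent to $\partial D$ at $a=a(c_i)$ and $-f_j$ points into $D$ there), use $x\le\delta^{-1}e^{\delta x}$ and $f_j\cdot S(\tau_j)\le 0$, and tilt once more to get $B\le \delta^{-1}e^{(\epsilon f_j-\delta f_i)\cdot z}$. Choosing $z$ in $K$ with $(\epsilon f_j-\delta f_i)\cdot z<0$ and scaling $z_n=nz$ gives $f_i\cdot z_n\to\infty$ while $B_n\to 0$, hence $h_a(z_n)>0$ for large $n$. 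So there is no genuine obstacle; the ``uniform in $n$'' issue dissolves once you pass to the stopped representation.
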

The rest of the paper is organized as follows.
The next section states the natural conjecture about the Martin boundary of random walk, killed when leaving a two-dimensional convex cone. We also underline where the proof in \cite{igrolo}, which considers only the positive quadrant, breaks down for the general case.
 In the last section Proposition \ref{prop:harm} is proven by adapting the proof of Proposition \ref{prop:harmq}, contained in \cite{igrolo}, to the general setting we are considering.

\section{An Open Problem: Martin Boundary for general convex cones in $\mathbb{Z}^2$.}

The first significant work on the Martin boundary of random walks in euclidean lattices is \cite{neyspi}, where the authors show that every positive harmonic function $h$ for the random walk can be expressed as 
\begin{equation}
h(z) = \int_{C} e^{c\cdot z} d\gamma(c).
\end{equation} 
Here $\gamma$ is a positive Borel measure on some suitable set $C$.
These types of functions and the types considered in Remark \ref{remk:rmk1} of the next section are not harmonic for killed random walk on the quadrant. To "make" them harmonic, one has to consider the correction term. Therefore the form of the functions in Proposition \ref{prop:harmq}.
\newline \indent The main contribution of \cite{igrolo} is to show that these functions are the whole Martin boundary for the case of the positive quadrant (see Theorem 1 there).\newline\indent
Judging from the analogy between Proposition \ref{prop:harm} and \ref{prop:harmq}, one can conjecture the following (stated analoguously to Theorem 1 in \cite{igrolo}).\newline
\paragraph{\textbf{Conjecture}} For the cone encoded by $c_1$ and $c_2$ as in section \ref{sec:intro} and under the assumptions \textbf{A1 - A4} made there, we have that :
\begin{description}
\item [1] A sequence of points $z_n$ in $K$ with $\lim_{n \rightarrow}|z_n| = +\infty$ converge to a point of the Martin boundary for the killed random walk when leaving the cone, if and only if $\frac{z_n}{|z_n|} \rightarrow q$ for some $q \in \Gamma$.
\item [2] The full Martin Compactification of $K \cap \mathbb{Z}^2$ is homeomorphic to the closure of the set $\{w = \frac{z}{1+|z|} | z \in K \cap \mathbb{Z}^2 \}$ in $\mathbb{R}^2$.
\end{description}
In short, Proposition \ref{prop:harm} characterizes fully the Martin boundary of random walks on the two dimensional euclidean lattice, killed when leaving convex cones with vertex in zero.
\newline\indent
If one tries to carry over the methods of \cite{igrolo} to this general case, one sees that the \emph{communication condition} contained there and the \emph{large deviations result} can be modified to work for the more general setting as well. We will not give details how this is done, but we mention shortly that both can be proven if one replaces assumption \textbf{A2} by the following. 
\newline
\paragraph{\textbf{"Strong local" irreducibility:}}
 There exists some uniform $R>0$ such that for every $z \in K, e \in \mathbb{Z}^2, \quad |e|=1$ such that $z+e \in K$ we have : there exists a path of measure non zero within $K\cap B_R(z)$ from $z$ to $z+e$.
\newline
\newline\indent
This assumption seems to be neccessary, if one wants to work with the communication condition and is fulfilled in the positive quadrant setting due to irreducibility. The actual obstacle for generalizing the proof in the case of the positive quadrant is the lack of Markov-additivity for local processes for the general case. We recall that a Markov Chain $\mathcal{Z}_n = (A(n),M(n))$ on a countable space $\mathbb{Z}^d\times E$ is called \emph{Markov-additive} if for its transition matrix $p$ it holds:
\begin{equation*}
p((x,y),(x',y'))=p((0,y),(x'-x,y')) \text{  for all  }x,x'\in \mathbb{Z}^d, y,y'\in E
\end{equation*}
\cite{igrolo} make extensive use of this property when showing the above conjecture for the case of the positive quadrant. One idea for the general case of convex cones would be to look at local processes "deep" inside the cone, where the random walk is Markov-additive in two directions. But approaching the boundary of the cone, this property disappears in general in both directions. For the positive quadrant this happens only for one direction and this is crucial for the proof in \cite{igrolo}. Without Markov-additivity it seems impossible to come to a usable Ratio Limit theorem as was done for the positive quadrant in \cite{igrolo}. On the other hand, the proof of Proposition \ref{prop:harm} does not use Markov-additivity. This suggests that there should be more general methods than those of \cite{igrolo} for proving the conjecture made in this section. 

\section{Proof of Proposition \ref{prop:harm}.}
Before starting with a series of Lemmas, which will lead to the proof of Proposition \ref{prop:harm} we define for $i =1,2$
\begin{equation}
H_i = \{ z \in \mathbb{R}^2| z\cdot f_i > 0\}
\end{equation}
and 
\begin{equation}
\tau_i = \inf\{n\geq 0 | S(n) \not \in H_i\}.
\end{equation}
Then of course $\tau = \tau_1 \wedge \tau_2$ since $K = H_1 \cap H_2$. Finally, we introduce the family of twisted random walks $S_a$ with (substochastic) transition matrix 
\begin{equation}
p_a(z,z') = \gamma (z-z') e^{a \cdot (z'-z)}\text{ },\text{ } a \in D
\end{equation}
and $\tau_a$ the respective exit time from $K$.
We start the proof of Proposition \ref{prop:harm} by proving the following.
\begin{lemma}\label{lemma:l1}
For every $a \in D$ : $\mathbb{E}_z[e^{a\cdot (S(\tau)-z)}, \tau < \infty] = \mathbb{P}_z(\tau_a < \infty)$
\end{lemma}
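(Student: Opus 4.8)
The identity relates a weighted exponential moment of the original walk at its exit time to the (defective) probability that a twisted walk exits the cone. The natural approach is to unfold both sides over finite-time contributions and match them path by path. First I would write the left-hand side as a sum over the exit epoch: $\mathbb{E}_z[e^{a\cdot(S(\tau)-z)},\tau<\infty] = \sum_{n\geq 1}\mathbb{E}_z[e^{a\cdot(S(n)-z)},\tau=n]$, and similarly $\mathbb{P}_z(\tau_a<\infty)=\sum_{n\geq 1}\mathbb{P}_z(\tau_a=n)$. The event $\{\tau=n\}$ (resp. $\{\tau_a=n\}$) is measurable with respect to $S(0),\dots,S(n)$ and is the \emph{same} subset of path space for both walks, since $K$ is fixed and the twisting does not change which increments are possible (it only reweights them); this uses assumption \textbf{A2}/irreducibility only to the extent that the two walks have the same support of trajectories.

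The heart of the matter is the change-of-measure identity for a \emph{fixed} path. For a trajectory $z=z_0,z_1,\dots,z_n$ with successive increments $w_k=z_k-z_{k-1}$, the original walk assigns weight $\prod_{k=1}^n \gamma(w_k)$, while the twisted walk assigns $\prod_{k=1}^n p_a(z_{k-1},z_k)=\prod_{k=1}^n \gamma(w_k)e^{a\cdot w_k}$. The product of the exponential factors telescopes: $\prod_{k=1}^n e^{a\cdot w_k}=e^{a\cdot(z_n-z_0)}=e^{a\cdot(S(n)-z)}$. Hence for any event $B_n$ depending only on $(S(0),\dots,S(n))$,
\begin{equation}
\mathbb{P}_z\big((S(0),\dots,S(n))\in B_n,\ \text{twisted}\big)=\mathbb{E}_z\big[e^{a\cdot(S(n)-z)},\ (S(0),\dots,S(n))\in B_n\big].
\end{equation}
Taking $B_n=\{\tau=n\}$ and summing over $n\geq 1$ yields exactly the claimed equality, with the convergence of both series being automatic since all terms are nonnegative and the partial sums on the right are bounded by $\mathbb{P}_z(\tau_a\leq N)\leq 1$.

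The only genuine subtlety — the step I would flag as requiring care — is the justification that the twisting weights $p_a(z,z')$ are nonnegative and that the telescoping is valid, i.e. that $p_a$ is genuinely a substochastic kernel for $a\in D$: nonnegativity is clear, and $\sum_{z'}p_a(z,z')=\varphi(a)\leq 1$ by the definition of $D$, so no mass is created and the per-path computation above is legitimate with no integrability obstruction. One should also note the degenerate contribution at $n=0$ vanishes on both sides because $z\in K$ forces $\tau\geq 1$ and $\tau_a\geq 1$. With these checks in place the identity follows; essentially it is the elementary observation that an exponential martingale change of measure turns an exponential functional into a probability, applied at the stopping time $\tau$ via optional decomposition over $\{\tau=n\}$ rather than an optional-stopping theorem (which would need uniform integrability we do not have here).
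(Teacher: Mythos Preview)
Your proposal is correct and follows essentially the same approach as the paper: decompose over $\{\tau=n\}$, compute the path-by-path change of measure via the telescoping exponential $\prod_k e^{a\cdot w_k}=e^{a\cdot(S(n)-z)}$, and sum over $n$. The paper carries this out by further summing over the exit point $z'$, but the underlying idea and structure of the argument are identical.
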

\begin{proof}
For every $n \in \mathbb{N}$, for every $z, z' \in \mathbb{Z}^2$
\begin{equation}
\begin{split}
\mathbb{P}_z(S_a(n) = z', \tau_a = n) &= \mathbb{P}_z(S_a(i) \in K, i \leq n-1, S_a(n) = z', z' \in K^c) \\&= \delta_{z'}(K^c)\delta_z(K)\sum_{z_s \in K, 1\leq s\leq n-1} p_a(z, z_1)\dots p_a(z_{n-1},z')\\ &= \delta_{z'}(K^c)\delta_z(K) e^{a\cdot(z'-z)} \sum_{z_s \in K, 1\leq s\leq n-1} \gamma(z, z_1)\dots \gamma(z_{n-1},z')\\&= e^{a\cdot(z'-z)}\mathbb{P}_z(S(n) = z', \tau = n)
\end{split}
\end{equation}
and with this 
\begin{equation}
\begin{split}
\mathbb{P}_z(\tau_a < \infty) &= \sum_{n\geq 0}\sum_{z' \in \mathbb{Z}^2} \mathbb{P}_z(\tau_a = n, S_a(n) = z')\\
&=\sum_{n\geq 0}\sum_{z' \in \mathbb{Z}^2} e^{a\cdot(S(n)-z)}\mathbb{P}_z(S(n) = z', \tau=n) \\&= \mathbb{E}_z[e^{a\cdot(S(\tau)-z)}, \tau < \infty].
\end{split}
\end{equation}
\end{proof}

We go on with the following Lemma. 
\begin{lemma}\label{lemma:l2}
Every point in $int(\Gamma)$ has a neighborhood where 
\begin{equation}
a \rightarrow \mathbb{E}_z[e^{a \cdot S(\tau)}, \tau < \infty] 
\end{equation}
is finite for every $z\in K$.
\end{lemma}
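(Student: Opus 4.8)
The plan is to reduce the statement, via Lemma \ref{lemma:l1}, to a question about convergence of a series of twisted probabilities, and then to exploit the strict convexity of $D$ near $\partial D$. Fix $a_0 \in int(\Gamma)$, so $q(a_0) \in int(\Sigma)$ and $a_0$ lies in the (relative) interior of the boundary arc $\Gamma \subset \partial D$. For $a$ in a small neighborhood of $a_0$ we want $\mathbb{E}_z[e^{a\cdot S(\tau)},\tau<\infty] < \infty$. Writing $e^{a\cdot S(\tau)} = e^{a\cdot z} e^{a\cdot(S(\tau)-z)}$, it suffices to control $\mathbb{E}_z[e^{a\cdot(S(\tau)-z)},\tau<\infty]$ for $a$ near $a_0$. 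The first step is to observe that for $a\in D$ this equals $\mathbb{P}_z(\tau_a<\infty)\le 1$ by Lemma \ref{lemma:l1}, so the only issue is to handle $a$ slightly \emph{outside} $D$. Since $q(a_0)$ points into the interior of the cone and $D$ is strictly convex with non-vanishing gradient on $\partial D$, there is a direction $v$ (roughly $q(a_0)$ itself) and an $\varepsilon>0$ so that $a_0 - \varepsilon v \in int(D)$ while the whole ball $B_\delta(a_0)$ is contained in the half-space $\{a : (a-(a_0-\varepsilon v))\cdot q(a_0) \ge 0\}$ for $\delta$ small; equivalently, every $a\in B_\delta(a_0)$ can be written $a = a' + t q(a_0)$ with $a'\in int(D)$ and $0\le t\le C\delta$.

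The second step estimates $\mathbb{E}_z[e^{a\cdot(S(\tau)-z)},\tau<\infty]$ for such an $a$. Split the expectation according to the value of $S(\tau)$. Because $q(a_0)\in int(\Sigma)$ and the cone is convex, when the walk exits $K$ through one of the two bounding rays the overshoot $S(\tau)-z$ has a component along $q(a_0)$ that cannot be too large compared with $|S(\tau)-z|$ in the directions controlled by $a'$; more precisely, on $\{\tau<\infty\}$ one has $S(\tau)\in K^c$, and the geometry gives $q(a_0)\cdot(S(\tau)-z) \le \kappa + c\,\big(a'\cdot(S(\tau)-z)\big)_+ + (\text{boundary overshoot term})$ for constants depending only on $K$ and $a_0$ — here one uses that the two outer normals $-f_1,-f_2$ make a strictly acute angle with $q(a_0)$. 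Hence $e^{a\cdot(S(\tau)-z)} = e^{a'\cdot(S(\tau)-z)} e^{t q(a_0)\cdot(S(\tau)-z)}$ is bounded by $e^{\kappa't}$ times a bounded power of $e^{a'\cdot(S(\tau)-z)}$, up to a factor coming from the overshoot across the boundary lattice, which has exponential moments by assumption \textbf{A3}. Since $a'\in int(D)$ we have $\varphi(a')<1$, so $\mathbb{E}_z[e^{\lambda a'\cdot(S(\tau)-z)},\tau<\infty]$ is finite for $\lambda$ slightly bigger than $1$ (indeed the twisted walk $S_{\lambda a'}$ is strictly substochastic with geometric decay, by the same computation as in Lemma \ref{lemma:l1}). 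Combining these bounds with Hölder's inequality against the \textbf{A3}-moments of the single-step overshoot yields finiteness of $\mathbb{E}_z[e^{a\cdot(S(\tau)-z)},\tau<\infty]$, uniformly for $a\in B_\delta(a_0)$ and hence for each fixed $z\in K$.

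The main obstacle I anticipate is the second step: quantifying how far a twist $a$ just outside $D$ in the direction $q(a_0)$ can be pushed while keeping the corresponding expectation finite, i.e.\ making rigorous the inequality relating $q(a_0)\cdot(S(\tau)-z)$ to $a'\cdot(S(\tau)-z)$ plus a controlled boundary overshoot. This is where one must use both the \emph{convexity} of $K$ (so that $q(a_0)$ is strictly interior to $\Sigma$ and the exit happens across one of two rays whose outward normals are at a strictly positive angle to $q(a_0)$) and assumption \textbf{A3} (to absorb the one-step overshoot of $f_i\cdot S(n)$ across the boundary of $H_i$). For the positive quadrant this is exactly the estimate carried out in \cite{igrolo}; in the general convex case the only change is that the two boundary normals $-f_1,-f_2$ replace $-(0,1),-(1,0)$, and convexity of $K$ guarantees that $q(a_0)$ still makes an acute angle with each of them, so the argument goes through verbatim after this cosmetic substitution.
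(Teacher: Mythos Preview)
Your approach is more complicated than necessary and leaves its central step---the ``geometric inequality'' bounding $q(a_0)\cdot(S(\tau)-z)$ by $c\,(a'\cdot(S(\tau)-z))_+$ plus an overshoot term---unproved and imprecise; you yourself flag this as the main obstacle. In fact no such inequality, no H\"older argument, and no appeal to overshoot moments from \textbf{A3} are needed at all.

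The paper's proof is a two-line monotonicity trick. Split the expectation according to which face is crossed: $\{\tau<\infty\}=\{\tau=\tau_1<\infty\}\cup\{\tau=\tau_2<\infty\}$, and recall that on $\{\tau=\tau_i\}$ one has $f_i\cdot S(\tau)\le 0$. Since $q(a_0)\in int(\Sigma)$ means $f_i\cdot q(a_0)>0$ for both $i$, the direction $-f_i$ points strictly into $D$ at $a_0$; hence for every $\tilde a$ in a small ball $B_\epsilon(a_0)$ there exist $\lambda_1,\lambda_2\ge 0$ with $\tilde a_i:=\tilde a-\lambda_i f_i\in\partial D\subset D$. On $\{\tau=\tau_i\}$ this gives
\[
\tilde a\cdot S(\tau)=\tilde a_i\cdot S(\tau)+\lambda_i\, f_i\cdot S(\tau)\le \tilde a_i\cdot S(\tau),
\]
so
\[
\mathbb{E}_z\big[e^{\tilde a\cdot S(\tau)},\tau<\infty\big]\le \sum_{i=1}^2\mathbb{E}_z\big[e^{\tilde a_i\cdot S(\tau)},\tau=\tau_i<\infty\big]\le \sum_{i=1}^2\mathbb{E}_z\big[e^{\tilde a_i\cdot S(\tau)},\tau<\infty\big]<\infty
\]
by Lemma~\ref{lemma:l1}, since each $\tilde a_i\in D$. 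The idea you missed is that the correct direction in which to push $\tilde a$ back into $D$ is not the single direction $-q(a_0)$, but rather $-f_i$ \emph{depending on which boundary ray is exited}; with that choice the exponent becomes monotone in $\lambda_i$ on the relevant event, and finiteness is immediate from Lemma~\ref{lemma:l1} without any further estimate.
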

\begin{proof}
From previous lemma, $a \rightarrow \mathbb{E}_z[e^{a \cdot S(\tau)}, \tau < \infty]$ is finite in $D$. We also have $f_i\cdot S(\tau_i) \leq 0$ on $\{\tau = \tau_i\}$. Now fix an $a \in int(\Gamma)$. We have the existence (recalling figure \ref{fig:f2} and the definition of the function $q\rightarrow a(q)$) of an $\epsilon > 0$ small enough, such that for every $\tilde{a} \in B_{\epsilon}(a)$ there exist $\lambda_1, \lambda_2 \geq 0$ with $\tilde{a_i} := \tilde{a} -\lambda_i f_i \in \partial D$. Then we have of course that 
\begin{equation}
\tilde{a_i} \cdot S(\tau_i) = \tilde{a}\cdot S(\tau_i) -\lambda_i f_i\cdot S(\tau_i) \geq \tilde{a}\cdot S(\tau_i),
\end{equation}
on the event $\{\tau=\tau_i\}$ and therefore 
\begin{equation}
\begin{split}
&\mathbb{E}_z[e^{\tilde{a}\cdot S(\tau)}, \tau < \infty]\leq \mathbb{E}_z[e^{\tilde{a_1}\cdot S(\tau_1)}, \tau =\tau_1 < \infty] + \mathbb{E}_z[e^{\tilde{a_2}\cdot S(\tau_2)}, \tau = \tau_2 < \infty]\\& \leq
\mathbb{E}_z[e^{\tilde{a_1}\cdot S(\tau)}, \tau < \infty] + \mathbb{E}_z[e^{\tilde{a_2}\cdot S(\tau)}, \tau < \infty] < \infty
\end{split}
\end{equation}
due to previous lemma.
\end{proof}
Before we go on with the main task, we note the following simple Remark.
\begin{remk}\label{remk:rmk1}
For every $q \in \mathbb{S}^1$ and $\tilde{q}\in  \mathbb{S}^1$ perpendicular to $q$ we have that 
\begin{equation}
f_q(z) = \tilde{q} \cdot z e^{a(q)\cdot z}
\end{equation}
is harmonic for the original random walk $S(n)$.
\newline Indeed
\begin{equation}
\begin{split}
&\mathbb{E}_z[f_q(S(1))] = \mathbb{E}_z[\tilde{q}\cdot S(1) e^{a(q)\cdot S(1)}] \\&= \mathbb{E}_z[\tilde{q}\cdot (S(1)-z)e^{a(q)\cdot (S(1)-z)+a(q)\cdot z} + \tilde{q}\cdot z e^{a(q)\cdot (S(1)-z) + a(q)\cdot z}]\\
&=e^{a(q)\cdot z}\tilde{q} \cdot (\nabla \varphi (a)|_{a=a(q)} +z) = f_q(z),
\end{split}
\end{equation}
since $\nabla \varphi(a)|_{a=a(q)} = q$ for $q\in \mathbb{S}^1$.
\end{remk}
Returning to our main task, we note the following remark.
\begin{remk}\label{remk:rmk2}
For $z\in K$ and $a \in D$
\begin{equation}
\begin{split}
&\mathbb{E}_z[e^{a\cdot S(\tau)},\tau=\tau_2<\tau_1] \\&= \mathbb{E}_z[e^{a\cdot (S(\tau)-z))},\tau=\tau_2<\tau_1] e^{a\cdot z} \leq  e^{a\cdot z},
\end{split}
\end{equation}
since the expectation in the second line is just $\mathbb{P}_z(\tau_a = \tau_{a_2}<\tau_{a_1})\leq 1$ with the same reasoning as in Lemma \ref{lemma:l1}.
\end{remk}
From this last remark the following is immediate.
\begin{cor}\label{cor:cor1}
For $z\in K$ and $i,j\in \{1,2\}$ so that $i\not =j$
\begin{equation}
z \rightarrow \mathbb{E}_z[|f_i\cdot S(\tau)|e^{a(c_i)S(\tau)}, \tau= \tau_j<\tau_i]
\end{equation}
is finite.
\end{cor}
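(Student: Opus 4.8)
The plan is to dominate the integrand pointwise, on the event in question, by a constant multiple of $e^{a\cdot S(\tau)}$ for a suitably chosen $a\in D$, and then to quote Remark~\ref{remk:rmk2}. Fix $i\neq j$ in $\{1,2\}$. On the event $\{\tau=\tau_j<\tau_i\}$ the walk has not yet left $H_i$ at time $\tau$, so $f_i\cdot S(\tau)>0$ and hence $|f_i\cdot S(\tau)|=f_i\cdot S(\tau)$; moreover it has just left $H_j$, so $f_j\cdot S(\tau)\leq 0$. Using the elementary inequality $t\leq\epsilon^{-1}e^{\epsilon t}$ (valid for all $t\in\mathbb{R}$ and all $\epsilon>0$) with $t=f_i\cdot S(\tau)$, and then $e^{-\epsilon f_j\cdot S(\tau)}\geq 1$, one gets on this event
\begin{equation*}
|f_i\cdot S(\tau)|\,e^{a(c_i)\cdot S(\tau)}\leq\epsilon^{-1}e^{(a(c_i)+\epsilon f_i)\cdot S(\tau)}\leq\epsilon^{-1}e^{(a(c_i)+\epsilon(f_i-f_j))\cdot S(\tau)}.
\end{equation*}

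Next I would check that $a_\epsilon:=a(c_i)+\epsilon(f_i-f_j)$ lies in $D$ for all sufficiently small $\epsilon>0$. By Remark~\ref{remk:rmk1} we have $\nabla\varphi(a(c_i))=c_i$, so $c_i$ is an outer normal to the strictly convex compact set $D$ at the boundary point $a(c_i)$, and $\partial D$ is a smooth curve there since $\nabla\varphi$ does not vanish on it. A direct computation from the definition of the inward normals $f_1,f_2$ and the fact that the opening angle of $K$ equals $\phi\in(0,\pi)$ gives $f_i\cdot c_i=0$ and $f_j\cdot c_i=\sin\phi>0$, hence $(f_i-f_j)\cdot c_i=-\sin\phi<0$. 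Therefore $f_i-f_j$ points strictly into $D$ at $a(c_i)$, so $a_\epsilon\in D$ for all small $\epsilon>0$; fix one such $\epsilon$.

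Finally, taking $\mathbb{E}_z$ on both sides of the pointwise bound and applying Remark~\ref{remk:rmk2} with $a=a_\epsilon\in D$ (the remark is stated for $\{\tau=\tau_2<\tau_1\}$, but the other ordering is handled identically), I obtain for every $z\in K$
\begin{equation*}
\mathbb{E}_z\bigl[|f_i\cdot S(\tau)|\,e^{a(c_i)\cdot S(\tau)},\,\tau=\tau_j<\tau_i\bigr]\leq\epsilon^{-1}\mathbb{E}_z\bigl[e^{a_\epsilon\cdot S(\tau)},\,\tau=\tau_j<\tau_i\bigr]\leq\epsilon^{-1}e^{a_\epsilon\cdot z}<\infty,
\end{equation*}
which is the claim. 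The one non-formal ingredient is the geometric statement that $f_i-f_j$ is an inward direction for $D$ at $a(c_i)$; this is exactly where strict convexity of $D$ (rather than mere convexity) is used, but it follows at once from the properties of $D$ recalled in Section~\ref{sec:intro}, so I do not anticipate any real difficulty.
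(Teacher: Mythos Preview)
Your argument is correct and follows essentially the same route as the paper: bound $|f_i\cdot S(\tau)|$ by an exponential via $t\le \epsilon^{-1}e^{\epsilon t}$, use $f_j\cdot S(\tau)\le 0$ to further shift the exponent, and then invoke Remark~\ref{remk:rmk2} once the shifted point lies in $D$. The only cosmetic difference is that the paper allows two independent parameters, writing the shifted point as $a(c_i)+\delta f_i-\epsilon f_j$, whereas you take the single direction $f_i-f_j$; your explicit check that $(f_i-f_j)\cdot c_i=-\sin\phi<0$ (so the direction is inward for $D$ at $a(c_i)$) is in fact a cleaner justification of the geometric step than the paper's appeal to the figure.
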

\begin{proof}
Take w.l.o.g. $i=1$ and $j=2$. Then in the event that $\tau=\tau_2<\tau_1$ we have $f_1\cdot S(\tau) > 0$ and $f_2 \cdot S(\tau)\leq 0$. Also (look again at figure \ref{fig:f2}) for small enough $\delta > 0$ there always exists some suitable $\epsilon >0$ so that $a(c_1) + \delta f_1 - \epsilon f_2$ lies in $D$. This yields
\begin{equation}
\begin{split}
&\mathbb{E}_z[|f_1 \cdot S(\tau)|e^{a(c_1) \cdot S(\tau)}, \tau=\tau_2 < \tau_1 ] \leq \frac{1}{\delta}
\mathbb{E}_z [e^{a(c_1)+\delta f_1\cdot S(\tau)},\tau=\tau_2<\tau_1]\\
&\leq \frac{1}{\delta}\mathbb{E}_z[e^{a(c_1) + (\delta f_1 -\epsilon f_2) S(\tau)}, \tau = \tau_2 < \tau_1]\\
\end{split}
\end{equation}
since $-\epsilon f_2 S(\tau_2)\geq 0$. Now the result follows from Remark \ref{remk:rmk2}.
\end{proof}
Before going on with the next step in the proof of Proposition \ref{prop:harm}, we need an auxiliary lemma.
\begin{lemma}\label{lemma:l4}
For a random walk with jump $X_1$ of mean zero, $\mathbb{E}[|X_1|]>0$ and $\mathbb{E}[X_1^2]<\infty$ and $T_0 = \inf\{n \geq 1 |S(n) \leq 0\}$ we have $\mathbb{E}_x[|S(T_0)|] < \infty$ for $x>0$ \footnote{One could also generalize Lemma 3.1 in \cite{igrolo} for aperiodic random walks on lattices of $\mathbb{R}$ of algebraic dimension at most 2 and this would suffice for our purposes, but the statement here is more general. I thank Dr. Wachtel for suggesting me this proof.}. 
\end{lemma}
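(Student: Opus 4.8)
The plan is to identify $|S(T_0)|$, up to the deterministic shift $x$, with the overshoot of a renewal process built from the descending ladder heights of the walk, and then to invoke Wald's identity. Write $W_n := S(n) - x$, so that $W$ is a centered, non‑degenerate one‑dimensional walk started at $0$; in particular $\liminf_n W_n = -\infty$ almost surely, hence $T_0 < \infty$ a.s. Let $0 = \eta_0 < \eta_1 < \eta_2 < \cdots$ be the successive \emph{strict} descending ladder epochs of $W$. By the strong Markov property the decrements $H_k := W_{\eta_{k-1}} - W_{\eta_k} > 0$ are i.i.d.\ copies of the strict descending ladder height $H := H_1$, so that $R_k := -W_{\eta_k} = H_1 + \cdots + H_k$ is a renewal process with strictly positive increments and $R_k \uparrow \infty$. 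Since $W$ first reaches the level $-x$ (or below) precisely at a strict descending ladder epoch, one checks that $T_0 = \eta_N$ with $N := \inf\{k \ge 1 : R_k \ge x\}$, so that $S(T_0) = x - R_N \le 0$ and
\begin{equation*}
|S(T_0)| = R_N - x .
\end{equation*}

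The second step is Wald's identity. The index $N$ is a stopping time for the i.i.d.\ sequence $(H_k)$, and $\mathbb{E}[N] = 1 + \sum_{k \ge 1}\mathbb{P}(R_k < x) < \infty$ because $(R_k)$ is a renewal process with strictly positive i.i.d.\ increments, so that its renewal function is finite at every point. Hence, as soon as $\mathbb{E}[H] < \infty$,
\begin{equation*}
\mathbb{E}_x[\,|S(T_0)|\,] = \mathbb{E}[R_N] - x = \mathbb{E}[H]\,\mathbb{E}[N] - x < \infty ,
\end{equation*}
which is the assertion. Thus the lemma reduces to the statement that the strict descending ladder height of a centered random walk with finite variance has finite mean.

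This remaining input, $\mathbb{E}[H] < \infty$, is a classical fact of fluctuation theory and may simply be quoted; alternatively it can be proved by the following one‑step estimate, which also pinpoints where the finite‑variance hypothesis enters. At the first time $\sigma := \inf\{n \ge 1 : W_n < 0\}$ one has $W_{\sigma-1} \ge 0$ and $X_\sigma < 0$, whence $H = -W_\sigma = X_\sigma^{-} - W_{\sigma-1} = (X_\sigma^{-} - W_{\sigma-1})^{+}$. Summing over $n$ and using that $\{\sigma \ge n\}$ depends only on $X_1,\dots,X_{n-1}$, hence is independent of $X_n$, one gets
\begin{equation*}
\mathbb{E}[H] = \sum_{n \ge 1} \mathbb{E}\big[(X_n^{-} - W_{n-1})^{+};\ \sigma \ge n\big] = \sum_{y \ge 0} \psi(y)\,\widehat{G}(y), \qquad \psi(y) := \mathbb{E}\big[(X_1^{-} - y)^{+}\big],
\end{equation*}
where $\widehat{G}(y) = \sum_{n \ge 0}\mathbb{P}_0(W_n = y,\ \sigma > n)$ is the Green function of $W$ killed on leaving $[0,\infty)$. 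Here $\sum_{y \ge 0}\psi(y) < \infty$ because it is dominated by a constant multiple of $\int_0^\infty t\,\mathbb{P}(X_1^{-} > t)\,dt \le \tfrac12\,\mathbb{E}[X_1^2] < \infty$, so the only remaining point is the uniform bound $\sup_{y \ge 0}\widehat{G}(y) < \infty$: a centered walk started on the boundary visits any prescribed level only a bounded‑in‑the‑level number of times before leaving the half‑line. This is the one genuinely delicate estimate, and the place where $\mathbb{E}[X_1^2] < \infty$ is really used; it follows, for instance, from the local limit theorem for random walks conditioned to stay positive, or equivalently from the classical estimate on the renewal function of the descending ladder process. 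Granting it, $\mathbb{E}[H] \le C\sum_{y \ge 0}\psi(y) < \infty$, and we are done. In short, the reduction to an overshoot problem is soft, and the only real work is the uniform control of $\widehat{G}$ — equivalently, the finiteness of the descending ladder‑height mean.
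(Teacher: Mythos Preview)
Your argument is correct and follows the same architecture as the paper's: pass to the strict descending ladder heights of the centered walk $W_n=S(n)-x$, identify $|S(T_0)|$ with the overshoot $R_N-x$ of the ladder-height renewal process past level $x$, and reduce everything to the single input $\mathbb{E}[H]<\infty$, which under $\mathbb{E}[X_1^2]<\infty$ is the classical result of Chow that the paper also cites.

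The one technical difference is how the overshoot is controlled once $\mathbb{E}[H]<\infty$ is in hand. You use Wald's identity, $\mathbb{E}[R_N]=\mathbb{E}[H]\,\mathbb{E}[N]$, together with finiteness of the renewal function to get $\mathbb{E}[N]<\infty$; the paper instead invokes the Borovkov--Foss overshoot bound, which yields $\mathbb{E}[-W_{\tau_0}]\le c(1+x)\int_0^\infty F[u,\infty)\,du$ and then concludes from $\mathbb{E}[|\chi_-|]<\infty$. Your route is more elementary and self-contained (no external overshoot theorem), while the paper's gives an explicit linear-in-$x$ upper bound as a by-product. Your optional alternative derivation of $\mathbb{E}[H]<\infty$ via the killed Green function $\widehat{G}$ is reasonable as a sketch, but the uniform bound $\sup_y\widehat{G}(y)<\infty$ you isolate is indeed the nontrivial point and is essentially equivalent in depth to what you are trying to prove; citing Chow directly, as both you and the paper ultimately do, is the clean way to close the argument.
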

\begin{proof}
We define $\{\chi_{-}^{(n)}\}_n$ as the negative ladder heights of the random walk $\{S(n)-x|n\geq 1\}$ and look at $W_{\tau_0}$ where $W(n) = \sum_{i=1}^n \chi_{-}^{(i)},\quad W(0)=0$ and 
\begin{equation}
\tau_0 = \min\{k\geq 1| x + W_n < 0\}
\end{equation}
We have $\mathbb{E}[\chi_{-}^{(i)}] < 0$ and $\mathbb{E}_x[S_{T_0}] =\mathbb{E}[W_{\tau_0}]$. Using \cite{borovkov}, more exactly Theorem 2.1. there, we get
\begin{equation}
\begin{split}
& \mathbb{E}[-W_{T_0}] = \int_{0}^{\infty} \mathbb{P}(\chi (x) > t) dt \\
& \leq c \int_{0}^{\infty} \left(F[t, +\infty) + \int_{t}^{t+x} F[u,+\infty)du\right)dt \\ &= c(1+x) \int_{0}^{+\infty} F[u,+\infty)du,
\end{split}
\end{equation} 
where we have used Tonelli in the second equality and $\chi(x) = -W_{\tau_0} - x$ and $F$ is the distribution function of $\chi_{-}^{i}$. Now we are done if $\mathbb{E}[|\chi_{-}^{i}|]<\infty$. But this is clear from the assumption $\mathbb{E}[X_1^2]<\infty$ and results in \cite{chow}.
\end{proof}
Returning to our main task we prove the following.
\begin{lemma}\label{lemma:l5}
For $z\in K$ , $i=1,2$ 
\begin{equation}
z \rightarrow \mathbb{E}_z[|f_i\cdot S(\tau)|e^{a(c_i)\cdot S(\tau)}, \tau<\infty]
\end{equation}
is a finite well-defined function.
\end{lemma}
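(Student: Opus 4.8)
\medskip
\noindent\emph{Proof idea.} The plan is to fix $i$, say $i=1$ (the case $i=2$ being symmetric), and to decompose $\{\tau<\infty\}$ according to which half-plane is left first. On $\{\tau=\tau_2<\tau_1\}$ the quantity $\mathbb{E}_z[|f_1\cdot S(\tau)|e^{a(c_1)\cdot S(\tau)},\,\tau=\tau_2<\tau_1]$ is finite by Corollary \ref{cor:cor1}. On the complementary event $\{\tau=\tau_1\le\tau_2\}$ one has $S(\tau)=S(\tau_1)$, and since the integrand is nonnegative it suffices to bound
\begin{equation*}
\mathbb{E}_z\big[|f_1\cdot S(\tau_1)|\,e^{a(c_1)\cdot S(\tau_1)},\,\tau_1<\infty\big].
\end{equation*}

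For this I would run the same change-of-measure computation as in Lemma \ref{lemma:l1}, but with the half-plane $H_1$ in place of $K$: decomposing on $\{\tau_1=n,\ S(n)=z'\}$ and using $p_{a(c_1)}(z,z')=\gamma(z-z')e^{a(c_1)\cdot(z'-z)}$ turns the displayed expectation into $e^{a(c_1)\cdot z}\,\mathbb{E}_z\big[|f_1\cdot S_{a(c_1)}(T)|,\,T<\infty\big]$, where $T$ is the exit time of the twisted walk $S_{a(c_1)}$ from $H_1$. The point is that $a(c_1)\in\partial D$, hence $\varphi(a(c_1))=1$, so $S_{a(c_1)}$ is an \emph{honest} random walk whose step law is $y\mapsto\gamma(y)e^{a(c_1)\cdot y}$; by \textbf{A3} the generating function is smooth with finite second derivatives, so this walk has finite second moment, and its mean step is $\nabla\varphi(a(c_1))$.

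Next I would project onto $f_1$. Since $q(a(c_1))=c_1$, the vector $\nabla\varphi(a(c_1))$ is a positive multiple of $c_1$; as $f_1\perp c_1$ this gives $f_1\cdot\nabla\varphi(a(c_1))=0$, so $\{f_1\cdot S_{a(c_1)}(n)-f_1\cdot z\}_{n\ge0}$ is a \emph{mean-zero}, finite-variance, non-degenerate (by \textbf{A4}) random walk on the line. Because $z\in K\subseteq H_1$ we have $x:=f_1\cdot z>0$, and then $f_1\cdot S_{a(c_1)}(T)$ has exactly the law of the undershoot $S(T_0)$ of this one-dimensional walk below $0$, started from $x$. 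Lemma \ref{lemma:l4} then yields $\mathbb{E}_x|S(T_0)|<\infty$; in particular $T_0<\infty$ a.s., so restricting to $\{T<\infty\}$ loses nothing. Combining this with the Corollary \ref{cor:cor1} bound shows $z\mapsto\mathbb{E}_z[|f_1\cdot S(\tau)|e^{a(c_1)\cdot S(\tau)},\,\tau<\infty]$ is finite for every $z\in K$, and since the integrand is nonnegative the expectation is a well-defined finite function.

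The step I expect to be the main obstacle is not any single estimate but the clean reduction of the boundary term to Lemma \ref{lemma:l4}: one must check that passing to the twisted walk is legitimate precisely because $\varphi(a(c_1))=1$, that the projected walk is genuinely centered (this is where the geometric inputs $f_1\perp c_1$ and $\nabla\varphi(a(c_1))\parallel c_1$ enter), and that no probability mass escapes to infinity in the change of measure — which is exactly the recurrence of the centered one-dimensional projection, guaranteed by finiteness of its variance. Everything else is bookkeeping on top of Corollary \ref{cor:cor1} and Lemmas \ref{lemma:l1} and \ref{lemma:l4}.
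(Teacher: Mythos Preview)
Your proposal is correct and follows essentially the same route as the paper: decompose $\{\tau<\infty\}$ into $\{\tau=\tau_2<\tau_1\}$ (handled by Corollary~\ref{cor:cor1}) and $\{\tau=\tau_1\}$, then pass to the twisted walk $S_{a(c_1)}$, observe that its $f_1$-projection is a centered, finite-variance one-dimensional walk because $\nabla\varphi(a(c_1))\parallel c_1\perp f_1$, and apply Lemma~\ref{lemma:l4} to the undershoot. You are in fact slightly more careful than the paper in tracking the factor $e^{a(c_1)\cdot z}$ from the change of measure and in noting that recurrence of the centered projection ensures no mass is lost on $\{\tau_1=\infty\}$.
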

\begin{proof}
Take $i=1$ w.l.o.g. Then
\begin{equation}
\begin{split}
&\mathbb{E}_z[|f_1 \cdot S(\tau)|e^{a(c_1)\cdot S(\tau)},\tau < \infty] = \mathbb{E}_z[|f_1 \cdot S(\tau)|e^{a(c_1)\cdot S(\tau)},\tau = \tau_2 < \tau_1] \\& + \mathbb{E}_z[|f_1 \cdot S(\tau)|e^{a(c_1)\cdot S(\tau)},\tau =\tau_1 < \infty]
\end{split}
\end{equation}
Note that the first term in the sum above is finite due to Corollary \ref{cor:cor1}. The second one is smaller than 
\begin{equation}
\mathbb{E}_z[|f_1 \cdot S(\tau)|e^{a(c_1)\cdot S(\tau)},\tau_1 < \infty] = -\mathbb{E}_z[f_1\cdot S(\tau_1) e^{a(c_1)\cdot S(\tau_1)},\tau_1 < \infty]
\end{equation}
Now we have that  
\begin{equation}
\begin{split}
& \mathbb{E}_0[f_1 \cdot S(1) e^{a(c_1) \cdot S(1)}] = f_1 \cdot \mathbb{E}_0[S(1)e^{a(c_1) \cdot S(1)}]\\ 
& = f_1\cdot \nabla\varphi (a)|_{a = a(c_1)} = f_1 \cdot c_1 = 0
\end{split}
\end{equation}
which means in short
\begin{equation}
\mathbb{E}_0[f_1\cdot S_{a}(1)] = 0
\end{equation}
Now the random walk $f_1 \cdot S_a(n)$ takes its values in the abelian subgroup $f_1 \cdot \mathbb{Z}^2$ of $\mathbb{R}$ and due to our assumptions on the original random walk, it surely holds that 
\begin{equation}
\mathbb{E}_0[|f_1\cdot S_a(1)|^2] < \infty
\end{equation}
With this and 
\begin{equation}
\mathbb{E}_z[|f_1\cdot S(\tau_1)| e^{a(c_1)\cdot S(\tau_1)},\tau_1 < \infty] = \mathbb{E}_{f_1\cdot z}[|f_1\cdot S_a(\tau_{a1})|]
\end{equation} 
we can use lemma \ref{lemma:l4} and finish the proof.
\end{proof}
We also prove the following lemma.
\begin{lemma}\label{lemma:l6}
For $a \in \Gamma$
\begin{equation}
z \rightarrow  1- \mathbb{E}_z[e^{a\cdot (S(\tau)-z)},\tau < \infty]
\end{equation}
is strictly positive in $K$ for $q(a) \in int(\Gamma)$ and $0$ otherwise.
\end{lemma}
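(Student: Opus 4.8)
The plan is to prove the two claims of Lemma~\ref{lemma:l6} separately, treating the boundary case $q(a)\in\partial\Sigma$ (i.e. $q(a)=c_i$) first since it is the easier one. If $q(a)=c_i$, then $a=a(c_i)$ and by definition $a\cdot f_i$-translates of $a$ stay on $\partial D$; the key observation is that $a\cdot S(\tau)=a\cdot S(\tau_i)$ on $\{\tau=\tau_i\}$ and $f_i\cdot S(\tau_i)\le 0$ there, while on $\{\tau=\tau_j<\tau_i\}$ one has $f_i\cdot S(\tau)>0$. Using Lemma~\ref{lemma:l1} with $a=a(c_i)\in\partial D$ gives $\mathbb{E}_z[e^{a\cdot(S(\tau)-z)},\tau<\infty]=\mathbb{P}_z(\tau_a<\infty)$, so I need to show this equals $1$, i.e. that the twisted walk $S_a$ started in $K$ leaves $K$ almost surely. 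Here I would use that $f_i\cdot S_a(n)$ is a mean-zero random walk (the computation $\mathbb{E}_0[f_i\cdot S_a(1)]=f_i\cdot c_i=0$ already appears in the proof of Lemma~\ref{lemma:l5}) with finite nonzero variance, hence is recurrent and in particular exits the half-line $\{f_i\cdot x>0\}$ almost surely; since $\{\tau_i<\infty\}\subseteq\{\tau_a<\infty\}$ up to the twist, this forces $\mathbb{P}_z(\tau_a<\infty)=1$, giving the value $0$.

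For the interior case $q(a)\in\mathrm{int}(\Sigma)$, the point $a$ lies on the open arc $\Gamma$ strictly between $a(c_1)$ and $a(c_2)$, and the goal is $\mathbb{P}_z(\tau_a<\infty)<1$, equivalently $\mathbb{P}_z(\tau_a=\infty)>0$. The idea is that the twisted walk $S_a$ has drift $\nabla\varphi(a)=|\nabla\varphi(a)|\,q(a)$ pointing into the interior of the cone $K$, because $q(a)\in\mathrm{int}(\Sigma)$ means $q(a)\cdot f_i>0$ strictly for both $i=1,2$. So $S_a$ is a transient random walk (it has nonzero drift by \textbf{A1}-type reasoning, as $\nabla\varphi$ does not vanish on $\partial D$) whose drift direction lies in the open cone; by the law of large numbers $S_a(n)/n\to\nabla\varphi(a)\in K$, so with positive probability the walk never leaves $K$. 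To make this rigorous I would combine the strong law for the projections $f_i\cdot S_a(n)$, which have strictly positive drift $f_i\cdot\nabla\varphi(a)>0$, with a standard argument: the events $\{f_i\cdot S_a(n)>0\text{ for all }n\}$ have positive probability for a random walk with positive drift (this is just $\mathbb{P}(\inf_n f_i\cdot S_a(n)>-\infty)$ combined with restarting, or more directly a consequence of transience to $+\infty$ of a walk with positive mean), and intersecting the two gives $\mathbb{P}_z(\tau_a=\infty)>0$ for $z$ far enough inside; irreducibility of the killed walk (\textbf{A2}) then propagates strict positivity to all $z\in K$. Finally, Lemma~\ref{lemma:l1} converts this back to $1-\mathbb{E}_z[e^{a\cdot(S(\tau)-z)},\tau<\infty]=\mathbb{P}_z(\tau_a=\infty)>0$.

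The main obstacle I expect is the positive-drift-stays-in-cone step: one must be careful that the drift direction $q(a)$ lying in the \emph{open} sector is genuinely enough, and handle the lattice/aperiodicity bookkeeping (assumption \textbf{A4}) so that the one-dimensional walks $f_i\cdot S_a(n)$ really do have the recurrence/transience behavior claimed on their respective lattices $f_i\cdot\mathbb{Z}^2$. A clean way to organize this is: first show $f_i\cdot S_a(n)\to+\infty$ a.s.\ when $q(a)\in\mathrm{int}(\Sigma)$ and $f_i\cdot S_a(n)$ is recurrent (oscillating) when $q(a)=c_i$, using the mean computation $\mathbb{E}_0[f_i\cdot S_a(1)]=f_i\cdot q(a)$; then for the interior case use that a random walk drifting to $+\infty$ stays positive forever with positive probability (a consequence of $\mathbb{P}(\text{all partial sums}>0)>0$, which holds whenever the mean is positive and the variance finite), applied to both coordinates simultaneously after conditioning. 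The half-plane exit times $\tau_i$ and the relation $\tau=\tau_1\wedge\tau_2$, $K=H_1\cap H_2$ already set up at the start of this section are exactly the tools needed, so no new machinery beyond classical one-dimensional random walk theory should be required.
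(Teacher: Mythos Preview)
Your proposal is correct and follows essentially the same route as the paper: use Lemma~\ref{lemma:l1} to rewrite the quantity as $\mathbb{P}_z(\tau_a=\infty)$, handle $a=a(c_i)$ via recurrence of the mean-zero projection $f_i\cdot S_a(n)$, and handle $a\in\mathrm{int}(\Gamma)$ via the SLLN (drift $\nabla\varphi(a)\in K$) together with \textbf{A2} to propagate positivity from one good $\hat z$ to all of $K$.

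One step you should tighten: you argue that each event $\{f_i\cdot S_a(n)>0\ \forall n\}$ has positive probability and then write ``intersecting the two gives $\mathbb{P}_z(\tau_a=\infty)>0$''. As stated this is a non sequitur, since two positive-probability events need not intersect with positive probability. The clean version (which your parenthetical about $\inf_n f_i\cdot S_a(n)>-\infty$ already hints at, and which is what the paper does) is to use the two-dimensional SLLN $S_a(n)/n\to m(a)\in K$ to conclude that \emph{both} minima $\min_n f_i\cdot S_a(n)$ are almost surely finite; then
\[
\mathbb{P}_0\bigl(\min_n f_1\cdot S_a(n)>-M,\ \min_n f_2\cdot S_a(n)>-M\bigr)\to 1\quad(M\to\infty),
\]
so choosing any $\hat z\in K$ with $f_i\cdot\hat z>M$ for $i=1,2$ gives $\mathbb{P}_{\hat z}(\tau_a=\infty)>0$ directly, after which \textbf{A2} and the Markov property finish the job.
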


\begin{proof}
For $i \in\{1,2\}$ fixed and $a=a(c_i)$ we have due to Lemma \ref{lemma:l1}
\begin{equation}
\mathbb{E}_z [e^{a \cdot (S(\tau)-z)},\tau < \infty] = \mathbb{P}_z(\tau_a < \infty) = 1
\end{equation}
since also $\mathbb{E}_0[f_1 \cdot S_a(1)] = 0$ i.e. the respective one dimensional random walk is recurrent with the same calculation as before.
\newline\indent 
Furthermore for $a \in int(\Gamma)$
\begin{equation}
m(a) = \sum_z z e^{a \cdot z} \gamma (z) = \nabla \varphi (a) = |\nabla \varphi (a)| q(a)
\end{equation}
This means that $m(a) \in K$, since not collinear to any of the two $c_i$-s. The Strong Law of Large Numbers implies 
\begin{equation}\label{eq:SLLN}
\frac{S_a (n)}{n} \rightarrow m(a), \text{ for } n \rightarrow \infty
\end{equation}
regardless of the starting point, so that there exists some $N > 0$ and $\hat{\epsilon}>0$ so that 
$\{z \in \mathbb{Z}^2 | |\frac{z}{n}-m(a)|< \hat{\epsilon} \text{ for some } n \geq N \}$ is contained in $K$. Together with \ref{eq:SLLN} this implies the existence of some $N_{z,\epsilon} >0 $ such that for $n \geq N_{z,\epsilon} $ we have $S_a (n) \in K$. Therefore 
\begin{equation}
\min_{n \in \mathbb{N}} f_i \cdot S_a (n)
\end{equation} 
is almost surely finite if $S_a (0) = z$. For some fixed $\hat{z} \in K$ (recall, then we have $f_i \cdot z > 0$ for $i=1,2$) we get with help of Lemma \ref{lemma:l1}
\begin{equation}
\begin{split}
& 1 - \mathbb{E}_{\hat{z}}[e^{a \cdot S(\tau) -\hat{z}}, \tau < \infty ] =
\mathbb{P}_{\hat{z}}(\tau_a = \infty ) \\ 
& = \mathbb{P}_0 (\min_{n \in \mathbb{N}} S_a (n) \cdot f_i > - \hat{z} \cdot f_i , i=1,2 ) > 0
\end{split}
\end{equation}
Now we use \textbf{A2} to get through the Markov property for general $z \in K$ 
\begin{equation}
\begin{split}
& 1-\mathbb{E}_z[e^{a \cdot (S( \tau )-z ) },\tau< \infty ] = \mathbb{P}_z(\tau_a = \infty) \\
& \geq \mathbb{P}_z(S_a(t) = \hat{z}, \tau_a > t)\mathbb{P}_{\hat{z}}(\tau_a = \infty) >0
\end{split}
\end{equation}
if $t$ is chosen such that the first probability is not zero.
\end{proof}
Just before proving Proposition \ref{prop:harm}, we prove the following.
\begin{lemma}\label{lemma:l7}
For $i=1,2$
\begin{equation}
z \rightarrow f_i \cdot z e^{a(c_i) \cdot z} -\mathbb{E}_z[f_i \cdot S(\tau) e^{a(c_i)\cdot S(\tau)}]
\end{equation}
is well-defined and nonnegative in $K$.
\end{lemma}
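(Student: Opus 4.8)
The plan is to recognize the function as $g_i(z)-\mathbb{E}_z[g_i(S(\tau)),\tau<\infty]$, where $g_i$ is a genuine harmonic function of the \emph{unkilled} walk, and then to run an optional--stopping argument in which the contribution of $\{\tau=\infty\}$ is simply discarded because it carries a favourable sign. Well-definedness is immediate: by Lemma~\ref{lemma:l5}, for every $z\in K$ the quantity $\mathbb{E}_z[|f_i\cdot S(\tau)|e^{a(c_i)\cdot S(\tau)},\tau<\infty]$ is finite, so the defining expectation converges absolutely.

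For the harmonic function I would apply Remark~\ref{remk:rmk1} with $q=c_i$ and $\tilde q=f_i$ (legitimate, since $f_i\perp c_i$): this gives that $g_i(z):=f_i\cdot z\,e^{a(c_i)\cdot z}$ is harmonic for $S(n)$. By assumption \textbf{A3} the generating function $\varphi$ is finite everywhere, hence $\mathbb{E}_z[|f_i\cdot S(n)|e^{a(c_i)\cdot S(n)}]<\infty$ for each $n$, so $(g_i(S(n)))_{n\geq0}$ is an integrable $\mathbb{P}_z$-martingale. Applying optional stopping at the bounded stopping time $n\wedge\tau$ yields
\[
g_i(z)=\mathbb{E}_z[g_i(S(n\wedge\tau))]=\mathbb{E}_z[g_i(S(\tau)),\tau\leq n]+\mathbb{E}_z[g_i(S(n)),\tau>n].
\]
On the event $\{\tau>n\}$ one has $S(n)\in K$, hence $f_i\cdot S(n)>0$ and $g_i(S(n))>0$; therefore the last term is nonnegative, and $g_i(z)\geq\mathbb{E}_z[g_i(S(\tau)),\tau\leq n]$.

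Finally I would let $n\to\infty$: since $|f_i\cdot S(\tau)|e^{a(c_i)\cdot S(\tau)}$ is integrable on $\{\tau<\infty\}$ by Lemma~\ref{lemma:l5}, dominated convergence gives $\mathbb{E}_z[g_i(S(\tau)),\tau\leq n]\to\mathbb{E}_z[g_i(S(\tau)),\tau<\infty]$, whence
\[
f_i\cdot z\,e^{a(c_i)\cdot z}-\mathbb{E}_z[f_i\cdot S(\tau)e^{a(c_i)\cdot S(\tau)},\tau<\infty]=g_i(z)-\mathbb{E}_z[g_i(S(\tau)),\tau<\infty]\ \geq\ 0,
\]
which is exactly the asserted nonnegativity.

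The point that needs care --- and the only real obstacle --- is that one cannot pass to the limit directly inside the \emph{full} stopped martingale, because on $\{\tau=\infty\}$ the variables $g_i(S(n))$ need not converge and are typically unbounded. It is precisely the geometric fact that $f_i\cdot z>0$ for $z\in K$ that lets the $\{\tau>n\}$ term be dropped while keeping the inequality pointing in the useful direction. The auxiliary verifications (integrability of the martingale at each finite time, and that $n\wedge\tau$ is admissible for optional stopping) are routine consequences of \textbf{A3}.
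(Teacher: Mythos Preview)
Your proof is correct and follows essentially the same route as the paper: recognize $g_i(z)=f_i\cdot z\,e^{a(c_i)\cdot z}$ as a martingale via Remark~\ref{remk:rmk1}, apply optional stopping at $n\wedge\tau$, drop the $\{\tau>n\}$ term using $f_i\cdot S(n)>0$ on that event, and pass to the limit by dominated convergence justified through Lemma~\ref{lemma:l5}. Your write-up is in fact a bit more careful than the paper's in spelling out the integrability of the martingale at each fixed time and in isolating exactly where the sign of $f_i$ on $K$ is used.
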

\begin{proof}
Take w.l.o.g. i = 1.
Due to Remark \ref{remk:rmk1} we have that $f_i \cdot S(n) e^{a(c_1) \cdot S(n)}$ is a martingale and by the stopping time theorem for every $z\in K$ we have
\begin{equation}
\begin{split}
& \mathbb{E}_z[f_1 \cdot S(\tau) e^{a(c_1)\cdot S(\tau)}, \tau \leq n] \\ & = \mathbb{E}_z[f_1 \cdot S(\tau\wedge n) e^{a(c_1)\cdot S(\tau\wedge n)}] -\mathbb{E}_z[f_1 \cdot S(n) e^{a(c_1)\cdot S(\tau)}, \tau > n] \\
&= f_{c_1}(z) - \mathbb{E}_z[f_1 \cdot S(n) e^{a(c_1)\cdot S(\tau)}, \tau >n] \leq f_{c_1}(z)
\end{split}
\end{equation}
with the notation of Remark \ref{remk:rmk1}. Now Lemma \ref{lemma:l5} justifies dominated convergence and the result follows.
\end{proof}
\begin{proof}[Proof of Proposition \ref{prop:harm}]
Take first $a\in int(\Gamma)$. By Lemma \ref{lemma:l6} $h_a$ is strictly positive in $K$. Set 
\begin{equation}
f(z) = \mathbb{E}_z[e^{a \cdot S(\tau)},\tau < \infty]
\end{equation}
For $z \not\in K$ one has $f(z) = e^{a\cdot z}$ which implies $h_a(z)=0$ and with it $\mathbb{E}_z[h_a(S(1)),\tau>1] = 0$.
\newline For $z\in K$ we have 
\begin{equation}\label{eq:ka}
\begin{split}
&\mathbb{E}_z[f(S(1))] = \mathbb{E}_z\left[\mathbb{E}_{S(1)}[e^{a\cdot S(\tau)},\tau < \infty]\right] \\& = \mathbb{E}_z[e^{a\cdot S(1)},\tau = 1] + \mathbb{E}_z\left[\mathbb{E}_z  [e^{a\cdot S(\tau)},\tau<\infty|\mathcal{F}_1],\tau > 1 \right ] \\& = f(z)
\end{split}
\end{equation}
as one can easily see. But this means $\mathbb{E}_z[f(S(1))] =f(z)$, which also means that for $h_a(z) = e^{a\cdot z} -f(z)$, the equality $\mathbb{E}_z[h_a(S(1))] = \mathbb{E}_z[h_a(S(1)),\tau>1] = h_a(z)$ holds. Here we have implicitly used that $\mathbb{E}_z[e^{a\cdot S(1)}]=e^{a\cdot z} $ since $a\in \partial D$. With this, the case $a\in int(\Gamma)$ is solved.
\newline Take now w.l.o.g. $a = a(c_1)$. We know from Lemma \ref{lemma:l7} that $h_a$ is well-defined and nonnegative in $K$. Take first $z\not\in K$. Then, it is clear that $h_a(z)=0$ as is $\mathbb{E}_z[h_a(S(1)),\tau>1]$. Take now $z\in K$. We have first $\mathbb{E}_z[h_a(S(1)),\tau =1] = 0$ and therefore 
\begin{equation}
\begin{split}
&\mathbb{E}_z[h_a(S(1)),\tau >1] = \mathbb{E}_z[h_a(S(1))] \\&= f_1 \cdot e^{a \cdot z} -\mathbb{E}_z\left[\mathbb{E}_{S(1)}[f_1 \cdot S(\tau) e^{f_1 \cdot S(\tau)},\tau < \infty]\right] =h_a(z)
\end{split}
\end{equation}
since the second term in the sum after the second equality is equal to $\mathbb{E}_z[f_1 \cdot S(\tau) e^{a\cdot S(\tau)},\tau < \infty]$ by the similar reasoning as in \ref{eq:ka}. With this, harmonicity of $h_a$ is proved and it remains to show that $h_a$ is strictly positive in $K$.
\newline We have 
\begin{equation}
\begin{split}
& h_a (z) e^{-a\cdot z} = f_1 \cdot z - \mathbb{E}_z[f_1 \cdot S(\tau) e^{a\cdot (S(\tau)-z)},\tau  = \tau_1 < \infty]\\ & - \mathbb{E}_z[f_1 \cdot S(\tau) e^{a\cdot (S(\tau)-z)},\tau = \tau_2 < \tau_1 < \infty] = f_1\cdot z -A - B
\end{split}
\end{equation} 
where of course $ f_1\cdot z -A \geq f_1\cdot z > 0$ since $z\in K$. For $B$ and $\delta > 0$ we have
\begin{equation}
\begin{split}
& B \leq \frac{1}{\delta}\mathbb{E}_z[e^{a\cdot (S(\tau)-z)+ \delta f_1\cdot S(\tau)},\tau =\tau_2<\tau_1] \\ &\leq  \frac{1}{\delta}\mathbb{E}_z[e^{a\cdot (S(\tau)-z)+ \delta f_1\cdot S(\tau) -\epsilon f_2\cdot S(\tau_2)},\tau =\tau_2<\tau_1]
\end{split}
\end{equation}
where $\delta , \epsilon > 0$ are chosen such that $\tilde{c} : = a + \delta f_1 - \epsilon f_2 \in \partial D$ (note that this is possible, see figure \ref{fig:f2} to get a grasp of this) and therefore due to Lemma \ref{lemma:l1}
\begin{equation}
B \leq \frac{1}{\delta}\mathbb{E}_z[e^{\tilde{c}\cdot (S(\tau)-z)},\tau < \infty]e^{(\epsilon f_2 -\delta f_1)\cdot z} \leq \frac{1}{\delta}  e^{(\epsilon f_2 -\delta f_1)\cdot z}
\end{equation}
Note now that there exists some $z\in K$ such that $(\epsilon f_2 -\delta f_1)\cdot z < 0$. Fix such a $z$ and set $z_n = nz$ and the respective $B$ and $A$ evaluated at $z_n$ with $B_n$ and $A_n$. There certainly exists $\hat{z} \in K $ such that $h_a(\hat{z})> 0$. Now use \textbf{A2} and Harnack's classical inequality for non-negative harmonic functions, here 
\begin{equation}
h_a(z) \geq h_a(\hat{z})\mathbb{P}_{\hat{z}}(\emph{Random Walk reaches $z$ in finite time})
\end{equation}
to get the positivity result for all $z\in K$.
\end{proof}

\textbf{Acknowledgment}.  I am thankful to Dr. Vitali Wachtel for useful comments and discussions about this manuscript.

\end{document}